\documentclass[a4paper,oneside,11pt]{article}

\usepackage{amsxtra,amssymb,amsmath,amsthm,amsbsy,enumerate}
\usepackage[all,2cell,v2]{xy}\UseTwocells\UseHalfTwocells
\usepackage{graphicx}

\usepackage{hyperref}

 \usepackage[dvipsnames]{xcolor}

\newtheorem{theorem}{Theorem}[section]

\newtheorem{conjecture}[theorem]{Conjecture}

\theoremstyle{definition}
\newtheorem{remark}[theorem]{Remark}
\newtheorem{corollary}[theorem]{Corollary}
\newtheorem{example}[theorem]{Example}

\newtheorem{proposition}[theorem]{Proposition}

\newtheoremstyle{named}{}{}{\itshape}{}{\bfseries}{.}{.5em}{\thmnote{#3}}
\theoremstyle{named}
\newtheorem*{namedtheorem}{}

\def\C{{\mathbb C}}

\renewcommand\P{\mathbb{P}}

\usepackage{tikz-cd}

\def\to{\rightarrow}

\def\to{\rightarrow}

\def\Z{\mathbb Z}

\title{\textbf{A note on Bondal's conjecture}}
\author{Dar\'io Mart\'in Aza}
\date{\today}

\begin{document}
{

\maketitle

\begin{abstract}
We prove that the connection vector fields associated to ample Poisson line bundles are not locally hamiltonian unless the Poisson structure is zero. We use this result to provide further evidence on Bondal's conjecture regarding the dimensions of the degeneracy loci of a holomorphic Fano Poisson manifold.
\end{abstract}




\section{Introduction}

Let $(X, \pi)$ be a holomorphic Poisson manifold, which means that $X$ is a holomorphic manifold and $\pi$ is a holomorphic bivector field such that $[\pi, \pi]=0$ for the Schouten bracket $[\; ,\;]$ of multivector fields. The study of holomorphic or algebraic Poisson geometry amounts to the study of Poisson brackets on the rings of holomorphic or algebraic functions on a holomorphic manifold or sometimes more singular spaces such as schemes or complex analytic varieties \cite{Polischuk} \cite{Pym}. To such a Poisson manifold we can assign a collection of Poisson subvarieties called degeneracy loci given by $$D_{2k}(\pi)=\{x\in X\; \vert \; \pi^{k+1}(x)=0 \}.$$ These subspaces provide $X$ with a stratification

$$D_0(\pi)\subseteq D_2(\pi) \subseteq D_4(\pi) \subseteq\dots D_{2r-2}(\pi)\subseteq X$$

\noindent where $r$ is the rank of the Poisson structure. In the year 1993, Bondal made the following conjecture involving these subspaces

\begin{conjecture} (\cite{bondal1993non}) If $(X, \pi)$ is a Fano Poisson complex manifold of rank $r$ then $D_{2k}$ has an irreducible component of dimension at least $2k+1$ for all $k\in \{0, 1, \dots, r-1\}.$
\end{conjecture}
 
Bondal's conjecture includes the following conjectures:
\begin{itemize}
     \item If $X$ is Fano then $\pi$ vanishes on a curve.
     \item If $X$ is Fano and $\pi$ of rank $2k$ then $D_{2k-2}$ has a component of dimension at least $2k-1.$ This result was proved in \cite{Polischuk} .    
\end{itemize}

In low dimensions the conjecture takes the following form 
\begin{itemize}
     \item In the case of dimension 3, where the rank of $\pi$ must be 2, the conjecture is reduced to proving the statement that $D_0$ contains a curve. This case was settled in \cite{Polischuk}.
     \item In the case of dimension 4 the conjecture is the pair of statements $D_0$ contains a curve and $D_2$ contains a subspace of dimension 3. This was proved in \cite{PoissonDegeneracy}.
     \item In the case of dimension 5 the conjecture is the same pair of statements as before: $D_0$ contains a curve and $D_2$ contains a subspace of dimension 3. This is still open.
\end{itemize}

In the year 1997, Polischuk published the seminal paper on algebraic Poisson structures \cite{Polischuk}, in which he tackled Bondal's conjecture, proving it in the case of dimension 3. In the work \cite{PoissonDegeneracy}, the authors approached Bondal's conjecture via the study Poisson modules. An invertible Poisson module is a line bundle $L$ together with a flat Poisson connection $\nabla$. To any invertible Poisson module there is associated a connection vector field and the connection vector field associated with the canonical module is the modular vector field which was initially studied in \cite{outerderivation} and \cite{weinstein}. The connection vector field is not a globally defined vector field but it is actually a global section of the quotient sheaf $\mathfrak{X}_{\text{Poiss}}\slash \mathfrak{X}_{\text{Ham}}$ between the sheaf of Poisson vector fields and that of hamiltonian vector fields. To each invertible Poisson module the authors of \cite{PoissonDegeneracy} assigned multiderivations supported on each degeneracy locus which could provide an explanation for the dimensions ocurring in Bondal's conjecture. These multiderivations were called residues. The k-th modular residue of a Poisson manifold $(X, \pi)$ is the globally defined section associated to the canonical module via this method. It would be interesting to relate the ampleness of an invertible Poisson module with nonvanishing properties of these residues in order to approach Bondal's conjecture. In this note we first prove the following

\begin{theorem}\label{teor} Let $(L, \nabla)$ be an ample Poisson line bundle on the complex Poisson analytic space $(X, \pi)$ and suppose that $\pi\neq0.$ Then the connection vector field of $(L, \nabla)$ defines a non trivial class in $H^0(X, \mathfrak{X}_{\text{Poiss}}\slash \mathfrak{X}_{\text{Ham}})$.
\end{theorem}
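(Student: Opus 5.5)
Suppose, for contradiction, that the class of the connection vector field of $(L,\nabla)$ in $H^0(X, \mathfrak{X}_{\text{Poiss}}\slash \mathfrak{X}_{\text{Ham}})$ is trivial. Then the connection vector field is locally hamiltonian. Concretely, choose an open cover $\{U_i\}$ with local trivializations $s_i$ of $L$ and transition functions $g_{ij}=s_j/s_i$. The local connection vector fields $Z_i$, defined by $\nabla_{df} s_i = Z_i(f)\, s_i$ (up to sign conventions), satisfy on overlaps
$$Z_j - Z_i = \pi^\sharp(d\log g_{ij}) = H_{\log g_{ij}}.$$
Triviality of the class means that, after refining the cover, $Z_i = H_{h_i}$ for holomorphic functions $h_i$ on $U_i$.

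\textbf{Step 1: build a cocycle killed by $\pi^\sharp$.} Set $\phi_{ij}=\log g_{ij} - h_j + h_i$ on $U_{ij}$, on a cover fine enough that the logarithms exist. Then $H_{\phi_{ij}}=0$, so each $\phi_{ij}$ is a Casimir function, and $d\phi_{ij}$ lies in the kernel of $\pi^\sharp$. The $1$-forms $\omega_{ij}=d\phi_{ij}=d\log g_{ij}-dh_j+dh_i$ form a \v{C}ech cocycle cohomologous to $d\log g_{ij}$. Hence the Atiyah class of $L$ in $H^1(X,\Omega^1_X)$, that is $c_1(L)$, is represented by a cocycle of closed $1$-forms with values in the kernel sheaf $\mathcal{K}=\ker(\pi^\sharp:\Omega^1_X\to T_X)$. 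Equivalently, $c_1(L)$ comes from $H^1(X,\mathcal{K})$, and even from $H^1$ of the sheaf of Casimir functions modulo constants.

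\textbf{Step 2: contract with $\pi$ to contradict ampleness.} Since $\pi$ is nonzero and the set where it has maximal rank $2r$ is dense, choose a point $x$ in the open dense set $X^\circ$ where $\pi$ has constant rank $2r>0$. By Weinstein splitting, $X^\circ$ is locally foliated by symplectic leaves of dimension $2r$, and a Casimir is constant along leaves. Restrict $L$ to a compact curve, or a compact subvariety $C$, passing through $x$ and tangent to a leaf at a general point. I would take a subvariety $Y$ of $X$ through $x$ that is a closure of a piece of a symplectic leaf, or more robustly argue through forms: the image of $c_1(L)$ under contraction with $\pi$ (wedge power $\pi^{r}$ contracted against $c_1(L)^{r}$) vanishes, because the representing forms are annihilated by $\pi^\sharp$. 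Ampleness gives a K\"ahler form $\omega$ in $c_1(L)$, and $\iota_{\pi}\omega$ is nonzero wherever $\pi\neq 0$, since a positive $(1,1)$-form restricted to a nonzero symplectic plane of $\pi$ pairs nontrivially. To make this rigorous I would use Dolbeault: with $\omega=\bar\partial$-representative $c_1(L)=[\omega]\in H^{1}(X,\Omega^1)$, the condition $\omega = \theta + \bar\partial\beta$ with $\pi^\sharp\theta=0$ implies that $\pi^\sharp\omega=\bar\partial(\pi^\sharp\beta)$ is $\bar\partial$-exact in $H^1(X,T_X)$. Pairing further with $\pi$ gives $\iota_\pi\omega\cdot(\text{stuff})$ that is exact, and integrating $\pi^{r}\lrcorner\,\omega^{r}$ against suitable forms yields zero versus positive.

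\textbf{Main obstacle.} The hard step is converting ``$c_1(L)$ lies in the image of $H^1(\mathcal{K})$'' into a contradiction with positivity, especially on a singular analytic space. The most concrete route I would try first is to find a compact curve $C\subset X$ tangent to the symplectic foliation, i.e.\ contained in a single symplectic leaf closure where the Casimirs $\phi_{ij}$ are locally constant. On such a curve, $L|_C$ has locally constant transition functions after rescaling, so it is flat and $\deg L|_C=0$, contradicting ampleness. Such curves exist when leaves have compact closures, but not in general. Otherwise I would work on $X^\circ$ with the $\bar\partial$ argument above, using the pointwise positivity $\pi\lrcorner\omega\neq0$ together with the maximum principle applied to $\pi\lrcorner\beta$-type functions along leaves.
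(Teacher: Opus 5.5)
Your Step 1 is correct and matches the paper's reduction. The paper gets there through the square $\eta\circ\partial_{\text{conn}}=i_*\circ c_1$ and the exponential sequence for $\text{Cas}_X$. Your version is more direct: $e^{\phi_{ij}}=g_{ij}e^{h_i-h_j}$ is a cocycle of nonvanishing Casimir functions defining $L$ itself.

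The gap is Step 2: none of the routes you list produces a contradiction.
\begin{itemize}
\item The full contraction of the $(2,0)$-bivector $\pi$ with a $(1,1)$-form is zero for type reasons. So ``$\iota_\pi\omega\neq0$'' is false, and so is the nonvanishing of $\pi^r\lrcorner\,\omega^r$. The partial contraction $\pi^\sharp\omega\in A^{0,1}(T_X)$ is pointwise nonzero, but that says nothing about its class.
\item More fundamentally, $\pi^\sharp c_1(L)\in H^1(X,T_X)$ is exactly the obstruction to the existence of a (not necessarily flat) Poisson connection on $L$. It therefore vanishes for \emph{every} Poisson line bundle, ample or not. On $\mathbb{P}^3$ one even has $H^1(T_{\mathbb{P}^3})=0$, so it vanishes for $\mathcal{O}(1)$ and any $\pi$. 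No argument that uses only this $\bar\partial$-exactness can contradict positivity.
\item The maximum-principle idea has no compact leaves to work on. As you note yourself, compact curves inside leaves need not exist.
\end{itemize}

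The missing idea is to use that your cocycle consists of \emph{closed} forms killed by $\pi^\sharp$, through a Bott-vanishing argument.
\begin{itemize}
\item Take a partition of unity $\{\rho_k\}$ subordinate to the cover and set $\theta_i=\sum_k\rho_k\,\omega_{ki}$. Then $\theta_j-\theta_i=\omega_{ij}$, so the $\theta_i$ are connection forms for $L$ in the frames $e^{-h_i}s_i$.
\item The curvature $\Theta=d\theta_i=\sum_k d\rho_k\wedge\omega_{ki}$ is a global closed $2$-form with $[\tfrac{i}{2\pi}\Theta]=c_1(L)$.
\item At a point where $\pi$ has maximal rank $2r$, every $\omega_{ki}$ lies in the $(n-2r)$-dimensional space $\ker\pi^\sharp$. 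Hence $\Theta^{n-2r+1}$ vanishes there, and so everywhere by density and continuity.
\item Thus $c_1(L)^{n-2r+1}=0$, which contradicts $\int_X c_1(L)^n>0$ since $r\geq 1$.
\end{itemize}
This handles smooth projective $X$. For singular $X$, run the same argument on a resolution of a component on which $\pi$ is generically nonzero.

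The paper closes the argument differently. It takes $L$ very ample, embeds $X$ by $L$, and asserts that the Casimir cocycle equals $(x_i/x_j)|_X$. Then all affine coordinates are Casimirs, every point is a Poisson subspace, and $\pi=0$. However, $L\simeq j^*\mathcal{O}(1)$ only makes the two cocycles cohomologous: they differ by $f_i/f_j$ with $f_i$ not Casimir. So that step cannot simply be borrowed to fill your gap; the curvature argument above does fill it.
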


This means that the connection vector field assigned to an ample Poisson line bundle is not locally hamiltonian. Since Gualtieri-Pym's residues only depend on said class defined by the connection vector field of the line bundle, this is a necessary condition for the nonvanishing of Gualtieri-Pym's residues. In particular, if $(X, \pi)$ is a Fano Poisson manifold its modular vector field defines a nonvanishing section of the quotient sheaf $\mathfrak{X}_{\text{Poiss}}\slash \mathfrak{X}_{\text{Ham}}$. It is important that we prove that this result is valid on Poisson analytic spaces which are possibly singular, since we want to apply it to the degeneracy locus themselves of a Poisson manifold. In this way, we obtain our main result which gives some evidence in favour of Bondal's conjecture, more precisely proving that the dimensions appearing in Bondal's conjecture are satisfied by at least half of the degeneracy loci involved.

\begin{theorem} Given $(X, \pi)$ a complex Fano Poisson manifold such that $D_{2k-2}(\pi)\subsetneq D_{2k}(\pi),$ then either $D_{2k-2}(\pi)$ contains an irreducible component of dimension at least $2k-1$ or $D_{2k}(\pi)$ contains an irreducible component of dimension at least $2k+1$.
\end{theorem}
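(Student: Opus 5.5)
We argue by contradiction and apply Theorem \ref{teor} to the degeneracy locus itself rather than to $X$. Suppose that every irreducible component of $D_{2k-2}(\pi)$ has dimension at most $2k-2$ and every irreducible component of $D_{2k}(\pi)$ has dimension at most $2k$. Let $Y$ be the union of the irreducible components of $D_{2k}(\pi)$ that are not contained in $D_{2k-2}(\pi)$; this union is nonempty because $D_{2k-2}(\pi)\subsetneq D_{2k}(\pi)$. Degeneracy loci are Poisson subspaces, so $Y$ with its reduced structure is a complex Poisson analytic space $(Y,\pi_Y)$.

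On $Y\setminus D_{2k-2}(\pi)$ the bivector has rank exactly $2k$, so $\pi_Y\neq 0$. Take the anticanonical bundle $K_X^{-1}$ with its canonical flat Poisson connection, whose connection vector field is the modular vector field. It is ample because $X$ is Fano. Its restriction $(K_X^{-1}|_Y,\nabla|_Y)$ is an ample Poisson line bundle on $(Y,\pi_Y)$, since Poisson modules restrict to Poisson subspaces. By Theorem \ref{teor}, its connection vector field $Z$ therefore defines a nonzero class in $H^0(Y,\mathfrak{X}_{\text{Poiss}}/\mathfrak{X}_{\text{Ham}})$. The goal is to contradict this by showing that $Z$ is locally hamiltonian everywhere on $Y$.

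\emph{Step 1 (the regular part).} At a point $y\in Y^{\circ}:=Y\setminus D_{2k-2}(\pi)$ the symplectic leaf through $y$ has dimension $2k$, and by the dimension assumption the component of $Y$ through $y$ has dimension at most $2k$. Hence, after shrinking, the leaf is an open subset of $Y$. So near $y$ the space $Y$ is a smooth $2k$-dimensional manifold with a nondegenerate Poisson structure, that is, a holomorphic symplectic manifold. On a symplectic manifold a Poisson vector field $V$ satisfies $d(\iota_V\omega)=0$, so by the holomorphic Poincar\'e lemma it is locally hamiltonian. Thus $Z$ is locally hamiltonian on $Y^{\circ}$.

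\emph{Step 2 (extension across the degenerate part).} The remaining set is $Y\cap D_{2k-2}(\pi)$. It has dimension at most $2k-2$, so it has codimension at least $2$ in the pure $2k$-dimensional space $Y$. I would pass to the normalization $\nu:\tilde Y\to Y$. The Poisson structure lifts to $\tilde Y$ because Poisson brackets extend to normalizations. The pullback $\nu^*K_X^{-1}$ is still ample since $\nu$ is finite, and its connection vector field is the lift of $Z$. Near a point $p$ of $\nu^{-1}(D_{2k-2})$, choose a small neighbourhood $U$ such that $U\setminus \nu^{-1}(D_{2k-2})$ is connected; this is possible because $\tilde Y$ is normal and the removed set has codimension at least $2$. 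On this connected set, the local hamiltonians of $Z$ from Step 1 are unique up to additive constants. Using the symplectic form one can glue them into a single function: the closed $1$-form $\iota_Z\omega$ is exact on small balls, and I would check that the period obstruction vanishes, by shrinking $U$ so that the relevant $H^1$ is trivial or by using the local structure near the removed set. Riemann extension on the normal space $\tilde Y$ then extends this function holomorphically to $U$, and the identity $Z=\pi^\sharp(df)$ extends by continuity. So the class of $Z$ vanishes on $\tilde Y$, which contradicts Theorem \ref{teor} applied to $(\tilde Y,\nu^*K_X^{-1})$. Hence one of the two dimension bounds must fail, which is exactly the statement.

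The main obstacle is Step 2. It requires checking that Theorem \ref{teor} and the Poisson module structure survive normalization. It also requires controlling the additive constants, equivalently showing exactness of $\iota_Z\omega$ on punctured neighbourhoods of the singular stratum. If these steps cause trouble, the fallback is to work directly on $Y$ with weakly holomorphic functions, which are enough to define hamiltonian vector fields on the normalization.
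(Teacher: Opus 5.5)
Your Step 1 is sound. Each local component of $Y$ through a point of $Y^{\circ}$ is Poisson, so it contains the $2k$-dimensional leaf germ, and $Y^{\circ}$ really is a $2k$-dimensional holomorphic symplectic manifold. Step 2, however, has a genuine gap, and it is the one you flag yourself.

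The obstruction to gluing the local hamiltonians of $Z$ on $U^{*}=U\setminus\nu^{-1}(D_{2k-2})$ is the class of the closed form $\iota_Z\omega$ in $H^1(U^{*};\mathbb{C})$. At a singular point of $\tilde Y$ this group is the first cohomology of the link, and it does not go away when you shrink $U$. Normality and codimension $\geq 2$ give you connectedness of $U^{*}$, and vanishing of $H^1$ only at smooth points of $\tilde Y$. They give nothing more.

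At this level of generality the local claim is in fact false. Take the normal surface $Y_0=\{F=0\}\subset\mathbb{C}^3$ with $F=x^3+y^3+z^3$ and the Jacobian bracket $\{g,h\}=dF\wedge dg\wedge dh/dx\wedge dy\wedge dz$. This structure is symplectic off the origin. The Euler field $e$ is tangent to $Y_0$ and is Poisson, since the bivector has weight $0$. Suppose $e=X_f$ near $0$. Then $X_{e(f)}=[e,X_f]=0$, so $e(f)$ is a Casimir, hence constant, hence $0$ because $e$ vanishes at the origin. So $f$ is invariant under the contracting flow of $e$, hence constant, which gives $e=0$, a contradiction. Here the link has $H^1=\mathbb{C}^2$ and $\iota_e\omega$ has nonzero periods.

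Nothing in your Step 2 uses more than this kind of local data, so it cannot be completed as sketched. The positivity of $L$ has to enter at exactly this point, not only at the end through Theorem \ref{teor}. Falling back to weakly holomorphic functions does not touch this issue.

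The paper avoids the problem by working with $c_1(L)$ instead of with $Z$. Let $V$ be a component of $D_{2k}$ not contained in $D_{2k-2}$, and set $U=V\setminus D_{2k-2}$.

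\begin{enumerate}
\item If $\dim V=2k$, your Step 1 shows that $U$ is symplectic. The Poisson connection is then an honest flat connection on $L|_U$, so $c_1(L|_U)=0$ in $H^2(U;\mathbb{C})$. The paper instead handles the branch $c_1(L|_U)\neq 0$ via Corollary \ref{coroco}, concluding $\dim V\geq 2k+1$ there.
\item Consequently $c_1(L|_V)$ lifts to some $\xi\in H^2(V,U;\mathbb{C})$.
\item The cap product $\xi\cap[V]$ lies in $H_{4k-2}(V\cap D_{2k-2};\mathbb{C})$ and maps to $c_1(L)\cap[V]$.
\item This class is nonzero because $\langle c_1(L)^{2k},[V]\rangle>0$.
\item Hence $\dim_{\mathbb{C}}(V\cap D_{2k-2})\geq 2k-1$.
\end{enumerate}

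The relative group $H^2(V,U)$ absorbs exactly the periods you could not control. The dimension bound kills the relevant homology in degree $4k-2$, whereas codimension alone never kills $H^1$ of links. If you replace Step 2 by this argument, Step 1 together with it gives a complete proof that does not need Theorem \ref{teor} at all.
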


The paper is organized as follows: in section 2 we provide definitions involving complex Poisson manifolds and analytic spaces. In section 3 we review some facts concerning Poisson modules and Gualtieri-Pym's definition of the residues assigned to an invertible Poisson module. Our original results are discussed in section 4.

\paragraph{Acknowledgements.} This work was done while working on the author's PhD thesis at Universidad de Buenos Aires fully supported by CONICET and under the suppervision of Federico Quallbrunn and Mat\'ias del Hoyo. We thank Federico and Mat\'ias for their guidance and mentorship. We also thank Sebasti\'an Velazquez for carefully reading the first draft of this paper.



\section{Holomorphic Poisson Manifolds}

A \textbf{Poisson structure} on a complex manifold $X$ is a holomorphic bivector $\pi\in \Gamma (X, \Lambda ^2 T_X)$ such that  $[\pi, \pi]=0,$ where $[\; , \;]$ is the Schouten bracket of multivector fields. Whenever $(X, \pi)$ is a complex Poisson manifold $\mathcal{O}_X$ inherits a bracket which turns it into a sheaf of Poisson algebras. The bracket is given by $\{f,  g\}=\pi(df\wedge dg).$  We say that the \textbf{rank} of a Poisson structure $\pi$ is $r$ if $\pi^r\neq0$ but $\pi^{r+1}=0$. Holomorphic Poisson structures on a holomorphic manifold amount to two usual Poisson structures on the underlying differentiable manifold corresponding to the real and imaginary part of the bivector $\pi$ and a compatibility with the complex structure on the manifold: namely, if $\pi_I$ and $\pi_R$ are the imaginary and real parts of $\pi$ and $J$ the complex structure on $X,$ then $(\pi_I, J)$ has to form a Poisson Nijenhuis structure and $\pi_R^{\#}=J\circ \pi_I^{\#}.$ \\

We will also work with Poisson structures over complex analytic spaces which may not be smooth. Given a complex analytic space $X,$ we define the sheaf of k-multiderivations on $X$ as $$\mathfrak{X}_{X}^k=(\Omega_{X}^k)^*.$$ When $X$ is a complex manifold, $\Omega_{X}^k$ is a locally free sheaf and $(\Omega_{X}^k)^*\simeq \Lambda^k(\Omega_X^1)^*=\Lambda^k T_X$ but in general this may not be the case. The sheaf of multiderivations on a complex analytic space inherits a unique graded multilinear bracket extending the Lie bracket of derivations and we call this the Schouten bracket. A Poisson structure on a complex analytic space $X$ is a global biderivation $\pi\in \Gamma(X, \mathfrak{X}^2_{X})$ satisfying $[\pi, \pi]=0$ where $[, ]$ is the Schouten bracket of multiderivations. We remark that a bivector field induces a biderivation but in general a Poisson structure on a complex analytic space will not be induced by a global bivector field. \\

\begin{example}If $\mathfrak{g}$ is a complex Lie algebra, the dual space $\mathfrak{g}^{*}$ is a holomorphic Poisson manifold and is such that the bracket of linear functions is also a linear function. To see this, consider the inclusion $i:\mathfrak{g}\subseteq \mathcal{O}_{\mathfrak{g}^*}$, where $\mathfrak{g}$ is identified with the linear functions on $\mathfrak{g}^*$. Since $\mathfrak{g}$ generates $\mathcal{O}_{\mathfrak{g}^*}$ as a commutative algebra, there is a unique Poisson bracket $\{ \;,\, \}$ on $\mathcal{O}_{\mathfrak{g}^*}$ such that $\{i(x), i(y)\}=i([x, y])$ and this Poisson structure is called the Kostant-Soriau Poisson structure on $\mathfrak{g}^*$. Since the tangent sheaf to $\mathfrak{g}^*$ is free and generated by vector fields $\partial_{x_i}$ where $\{x_1, x_2, \dots , x_n\}$ is a basis for $\mathfrak{g},$ we have that the bivector field defining the Poisson structure is of the form $$\pi= \sum_{i<j} f_{ij} \partial_{x_i}\wedge \partial_{x_j}$$ and the functions $f_{ij}$ defining this bivector are linear functions of the form $f_{ij}= \sum_{k=1}^{n}c_{ij}^{k} i(x_k),$ where the $c_{ij}^{k}$ are the structure constants of the Lie algebra $\mathfrak{g}$.  
\end{example}


On a complex Poisson analytic space there are certain sheaves of vector fields which are distinguished: the Hamiltonian vector fields and the Poisson vector fields. Given a local holomorphic function $f$ on a complex Poisson analytic space $(X, \pi),$ we define a local holomorphic vector field $X_f = i_{df}(\pi)$ which we call its \textbf{hamiltonian vector field}. We denote by $\mathfrak{X}_\text{Ham}$ the sheaf of Hamiltonian vector fields. We call a local holomorphic function $f$ a \textbf{Casimir function} if its hamiltonian vector field is zero. We denote by $\text{Cas}_{X}$ the sheaf of holomorphic Casimir functions on $(X, \pi).$ A \textbf{Poisson vector field} on a complex Poisson manifold $(X, \pi)$ is a local holomorphic vector field $X\in \Gamma(U, T_X(U))$ such that $L_X (\pi)=0.$ We denote by $\mathfrak{X}_\text{Poiss}$ the sheaf of Poisson vector fields. Hamiltonian vector fields are always Poisson vector fields but not necessarily the opposite. \\

Both the sheaves of Poisson vector fields and of hamiltonian vector fields are not $\mathcal{O}_X$ modules sheaves: they are sheaves of $\C-$vector spaces. Hamiltonian vector fields form an integrable subsheaf of $T_X$ since $[X_f, X_g]=X_{\{f, g \}}.$  On a complex Poisson analytic space $(X, \pi),$ the image of the map $\pi^{\#}:\Omega^1_{X}\to T_X$ given by contraction with the Poisson tensor is a coherent subsheaf of $T_X$ generated as $\mathcal{O}_X$ module by the hamiltonian vector fields and is therefore an integrable distribution. It will be called the \textbf{symplectic foliation} of $(X, \pi)$ and written $\mathcal{F}_{\text{sym}}.$ The symplectic foliation satisfies that all its leaves are symplectic submanifolds (possibly singular) of $X$ and therefore are even dimensional: this includes the integral submanifolds passing through singular points of the foliation. 

\begin{example}Holomorphic symplectic manifolds are naturally holomorphic Poisson manifolds. If $(X, \pi)$ is a holomorphic Poisson manifold of dimension $2n$ and the contraction $\pi^{\#}: \Omega^1_X \to T_X$ is generically an isomorphism we say that $(X, \pi)$ is a generically symplectic Poisson manifold. If the anticanonical divisor $\text{Zeros}(\pi^n)$ is reduced as an analytic space, we call the Poisson structure \textbf{log-symplectic}. 
\end{example}

\begin{example}\label{Cerveau} On the projective space $\P^3,$ contraction with a section of the canonical bundle induces an isomorphism $H^0(X, \Lambda^2 T_X) \simeq H^0(X,\Omega^1_X\otimes \mathcal{O}(4))$. Under this identification, Poisson structures which do not vanish along a divisor correspond to twisted integrable 1-forms of degree $4$ which do not vanish along a divisor and those correspond to codimension 1 saturated foliations of degree $2$, which were classified in \cite{CerveauLinsNeto}. 
\end{example}

We now turn our attention to Poisson subvarieties. If $(X, \pi)$ is a Poisson analytic space, a \textbf{Poisson subspace} of $X$ is an analytic subspace $Y\subseteq X$ together with a Poisson structure $\sigma,$ such that the inclusion $i:Y\to X$ satisfies $\pi(df, dg)=\sigma (i^*(df), i^*(dg)).$ In case $Y\subseteq X$ has the structure of a Poisson subspace, this structure is unique and therefore we can say without ambiguity that $Y\subseteq X$ is a Poisson subspace.

\begin{proposition} Let $(X, \pi)$ be a Poisson analytic space and let $Y\subseteq X$ be an analytic subspace with ideal sheaf $I\subseteq \mathcal{O}_X.$ The following statements are equivalent:
\begin{itemize}
\item $Y$ admits the structure of a Poisson subspace. 
\item $I$ is a sheaf of Poisson ideals, meaning that $\{I, \mathcal{O}_X\}\subseteq I.$
\item Every local hamiltonian vector field is tangent to $Y.$
\end{itemize}
\end{proposition}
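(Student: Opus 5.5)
I will prove the equivalences as a cycle (i)$\Rightarrow$(ii)$\Rightarrow$(iii)$\Rightarrow$(ii)$\Rightarrow$(i). All three conditions are local, so it suffices to check them on a small open set $U$. Since the Poisson bracket is antisymmetric, $\{I,\mathcal{O}_X\}\subseteq I$ is the same as $\{\mathcal{O}_X, I\}\subseteq I$. The basic dictionary is that the hamiltonian vector field $X_f=i_{df}\pi$ acts on functions by $X_f(g)=\pm\{f,g\}$.

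For (i)$\Rightarrow$(ii), let $\sigma$ be a Poisson structure on $Y$ compatible with $\pi$. Take $h\in I$ and $f\in\mathcal{O}_X$. Then $i^*(dh)=d(h|_Y)=0$ in $\Omega^1_Y$, so the compatibility condition gives $\{h,f\}|_Y=\pi(dh,df)|_Y=\sigma(i^*dh,i^*df)=0$. Hence $\{h,f\}\in I$. (Here the condition $\pi(df,dg)=\sigma(\dots)$ is read as equality of functions on $Y$, i.e.\ after restriction.)

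For (ii)$\Leftrightarrow$(iii), recall that a derivation $v$ of $\mathcal{O}_X$ is tangent to $Y$ exactly when $v(I)\subseteq I$. By the dictionary above, $X_f(I)\subseteq I$ for every local $f$ is the same statement as $\{f,I\}\subseteq I$ for every local $f$.

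For (ii)$\Rightarrow$(i), I define a bracket on $\mathcal{O}_Y=\mathcal{O}_X/I$ by $\{\bar f,\bar g\}_Y=\overline{\{f,g\}}$.
\begin{itemize}
\item \emph{Well-definedness.} Changing $f$ by an element of $I$ changes $\{f,g\}$ by an element of $\{I,\mathcal{O}_X\}\subseteq I$.
\item \emph{Poisson algebra axioms.} Bilinearity, antisymmetry, the Leibniz rule and the Jacobi identity all descend from $\mathcal{O}_X$, because $I$ is an ideal for both the product and the bracket.
\end{itemize}

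The main obstacle is that the paper defines a Poisson structure on a possibly singular space as a biderivation $\sigma\in(\Omega^2_Y)^*$ with $[\sigma,\sigma]=0$, not as a bracket, so I must realise the bracket by such a $\sigma$. Since $\{\ ,\ \}_Y$ is an antisymmetric biderivation of $\mathcal{O}_Y$, the universal property of Kähler differentials gives a unique $\mathcal{O}_Y$-bilinear alternating map $\Omega^1_Y\times\Omega^1_Y\to\mathcal{O}_Y$ with $(d\bar f,d\bar g)\mapsto\{\bar f,\bar g\}_Y$. This map factors through $\Omega^2_Y=\Lambda^2\Omega^1_Y$, so it defines $\sigma\in\mathfrak{X}^2_Y$. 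Next I check $[\sigma,\sigma]=0$. The Schouten bracket of a biderivation with itself, evaluated on $d\bar f\wedge d\bar g\wedge d\bar h$, is a multiple of the Jacobiator, and this identity holds on the generators $d\bar f$ of $\Omega^1_Y$. Since $[\sigma,\sigma]$ is a trivector derivation, it is determined by its values on these generators, so it vanishes.

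Compatibility $\pi(df,dg)|_Y=\sigma(i^*df,i^*dg)$ holds by construction. Uniqueness follows because the $i^*df$ generate $\Omega^1_Y$, so $\sigma$ is determined by its values on them. The $\sigma$ defined on small open sets are therefore unique, hence agree on overlaps and glue to a global section.
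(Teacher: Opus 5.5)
Your proof is correct. The paper gives no argument for this proposition; it only cites Gualtieri--Pym \cite{PoissonDegeneracy}. So there is no route to compare against, and your proposal supplies a self-contained proof where the paper has none.

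You handle the step you flag as the main obstacle correctly: realising the descended bracket as a section $\sigma\in(\Omega^2_Y)^*$ and checking $[\sigma,\sigma]=0$. Note that on an analytic space the ``universal property of K\"ahler differentials'' must be the analytic version. It holds for $\mathbb{C}$-derivations into coherent modules such as $\mathcal{O}_Y$, which is all you need.

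You can bypass that property entirely by descending $\pi$ itself. From the conormal sequence,
\[
\Omega^1_Y=\Omega^1_X/(I\,\Omega^1_X+\mathcal{O}_X\,dI), \qquad \Omega^2_Y=\Omega^2_X/(I\,\Omega^2_X+dI\wedge\Omega^1_X).
\]
For $h\in I$ and local $g,k$, condition (ii) gives $\pi(dh\wedge g\,dk)=g\{h,k\}\in I$. Since exact forms generate $\Omega^1_X$, $\pi$ therefore induces an $\mathcal{O}_Y$-linear map $\Omega^2_Y\to\mathcal{O}_Y$, which is your $\sigma$. Compatibility and uniqueness are then immediate, and $[\sigma,\sigma]=0$ follows from the Jacobi identity on generators exactly as you argue.
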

\begin{proof}
    A reference for the above result is \cite{PoissonDegeneracy}.
\end{proof}

Given $(X, \pi)$ a Poisson analytic space we call a subspace $Y\subseteq X$ a \textbf{strong Poisson subspace} if every local Poisson vector field is tangent to $Y.$ A strong Poisson subspace is always a Poisson subspace since hamiltonian vector fields are Poisson. We now define some important strong Poisson subspaces that can be considered inside any Poisson analytic space. Given a Poisson analytic space $(X, \pi)$ we define the $2k-th$ \textbf{degeneracy locus} of $\pi$ as  
$$D_{2k}=\text{Zeros}(\pi^{k+1}).$$
The degeneracy loci are invariant under the symmetries of the Poisson bivector and therefore are strong Poisson subspaces of $X.$ In this way the Poisson analytic space $X$ is endowed with a stratification by strong Poisson subspaces (possibly singular) 
$$D_0 \subseteq D_2 \subseteq D_4 \subseteq \dots \subseteq D_{2r}=X$$
where $2r$ is the rank of the bivector field $\pi.$ If $(X, \pi)$ is a Poisson manifold the highest non trivial degeneracy locus, $D_{2r-2},$ will be referred to as the singular set of the Poisson manifold. $D_{2r-2}$ coincides with the singular set of the symplectic foliation $\mathcal{F}_{\text{sym}}$. The degeneracy locus $D_{2k}$ admits also the following description
$$D_{2k}=\{x\in X \: | \text{   the symplectic leaf trough }x \text{ has dimension at most } 2k \}.$$


\section{Poisson modules}
On a Poisson manifold or analytic space there are distinguished vector bundles which serve as vector bundles with flat connections in Poisson geometry. Given a Poisson analytic space $(X, \pi)$ we will call a vector bundle $E$ over $X$ together with a flat Poisson connection $\nabla,$ a \textbf{Poisson module}. This means a pair $(E, \nabla)$ such that
$\nabla: E \to T_X \otimes E$ is a $\C-$linear morphism satisfying $\nabla(f s)=-X_f \otimes s + f\nabla (s) $ and such that the composition $\nabla \circ \nabla$ vanishes. This last condition is referred to as the \textbf{flatness} of the Poisson connection. Poisson modules induce an action of $\Omega^1_{X}$ on $E$ by regarding $\nabla_{\alpha}(s)=(\nabla(s))(\alpha).$ We will mainly focus our attention on Poisson invertible sheaves which we call \textbf{Poisson line bundles}. The set of Poisson line bundles forms a group under the usual tensor product. We call this group the \textbf{Picard-Poisson} group of $(X, \pi)$ and we denote it by $\text{PicPoiss}(X, \pi).$ The Picard Poisson group of $(X, \pi)$ can be identified with the first group hypercohomology of the complex of sheaves
$$0\longrightarrow\mathcal{O}_X^{\times}\longrightarrow T_X\longrightarrow \Lambda^2 T_X \longrightarrow ...$$
where the first map is given by $f\mapsto \frac{X_f}{f}.$ Given a Poisson line bundle $(L, \nabla)$ and a local trivialization $s$ of $L$ we get a vector field defined by 
$\nabla(s) = Z_{\text{conn}}\otimes s.$ This vector field is called the \textbf{connection vector field} of $(L, \nabla)$ associated to the trivialization $s$. Because of the flatness of $\nabla$ we have that $Z_{\text{conn}}$ is a Poisson vector field. Due to this fact, if $L$ is a Poisson line bundle over $(X, \pi)$ and $Y\subseteq X$ is a strong Poisson subspace then $L|_{Y}$ is a Poisson module over $(Y, \pi|_Y).$ \\

Given two different local trivializations $s, s'$ there is an holomorphic function $f$ such that $s=fs'$ in the intersection of both trivializing open sets. The connection vector field associated to $s$ and the one associated to $s'$ differ on $\frac{X_f}{f}$ in the intersection of both open sets. Because of this, even though there is no global connection vector field for a Poisson line bundle, there is a well defined global section of the quotient sheaf $\mathfrak{X}_{\text{Poiss}}\slash \mathfrak{X}_{\text{Ham}}$ between the sheaf of Poisson vector fields and that of hamiltonian vector fields, meaning that there is a covering by open subsets $U_i$ of $X$ such that on every open subset we have a Poisson vector field and on each intersection the chosen vector fields differ by a hamiltonian one. We will sometimes call this global section the connection vector field of the given Poisson line bundle. In this way the connection vector field defines a map
$$\partial_{\text{conn}}:\text{PicPoiss}(X, \pi)\to H^0(X, \mathfrak{X}_{\text{Poiss}}
\slash \mathfrak{X}_{\text{Ham}}).$$

\begin{example} The canonical line bundle $\omega_X$ of any Poisson manifold is a Poisson module. The connection can be described via the formula
$$\nabla_{\alpha}(s)=-\alpha\wedge d i_{\pi}(s)$$
for a given one form $\alpha$ and $s$ a section of $\omega_X.$ The connection vector field associated with this Poisson line bundle is called the \textbf{modular vector field} of $(X, \pi).$ The modular vector field is always locally hamiltonian around the regular points of the Poisson structure.
\end{example}

\begin{example}\label{ejemploGP} Let $X=\C^3$ with the holomorphic Poisson structure given by the bivector field $$\pi=c_{12}x_1 x_2 \partial_1 \wedge \partial_2+ c_{13}x_1 x_3 \partial_1 \wedge \partial_3+c_{23} x_2 x_3 \partial_2 \wedge\partial_3.$$
This Poisson structure is the restriction to an affine open subset of $\mathbb{P}^3$ of the Poisson structure induced by a foliation of type $L(1, 1, 1, 1)$ as in example \ref{Cerveau}. The symplectic foliation for this structure is generated by the hamiltonian vetor fields
\begin{align*}
\mathcal{F_{\text{sym}}}&=\langle i_{dx_1}(\pi), i_{dx_2}(\pi), i_{dx_3}(\pi)\rangle\\
&=\langle x_1(c_{12}x_2 \partial_2+c_{13}x_3\partial_3),x_2(-c_{12}x_1 \partial_1+c_{23}x_3\partial_3), x_3(-c_{13}x_1 \partial_1-c_{23}x_2\partial_2) \rangle.
\end{align*}
For generic values of $\{c_{12}, c_{13}, c_{23}\}$, the singular locus of $\pi$ is the union of the three coordinate axis. The modular vector field of this Poisson structure is a global Poisson vector field since $\omega_{\C^3}$ is trivial and can be computed to be $Z_{\text{mod}}=-(c_{12}+c_{13})x_1 \partial_1+(c_{12}-c_{23})x_2\partial_2+(c_{13}+c_{23})x_3\partial_3.$ Away from the singular locus, the modular vector field is tangent to the symplectic foliation as can be seen by the identity $Z_{\text{mod}}=\sum_{i=1}^{3} \frac{i_{dx_i}(\pi)}{x_i},$ however, on the singular locus $Z_{\text{mod}}$ does not vanish for generic values of $\{c_{12}, c_{13}, c_{23}\}$ even though all the hamiltonian vector fields do. 

\end{example}

\begin{example}
\begin{enumerate}
\item If $(X, 0)$ is a holomorphic manifold with a trivial Poisson structure then every line bundle in $X$ will have a Poisson module structure given by the trivial connection. The other Poisson module structures will be given by the choice of a holomorphic vector field in $X$ so that $\text{PicPoiss}(X,0)\simeq H^0(X, T_X)\times \text{Pic} (X).$

\item If $(X, \omega)$ is a simplectic manifold, then the line bundles that admit a Poisson module structure are those that admit a flat connection. Indeed, the symplectic structures provides an isomorphism $\omega : T_X\to T^{*}_X$ under which the Poisson module condition turns into $(\omega\otimes id) \circ \nabla: E\to T^{*}_X\otimes E$ being a flat connection. In this case, the Picard-Poisson group is given by the line bundles with vanishing Chern class.

\item If $(\mathbb{P}^n, \pi)$ is a holomorphic Poisson structure on $\mathbb{P}^n$ then every line bundle admits a flat Poisson connection. Indeed, since the canonical bundle of $\mathbb{P}^n$ admits a Poisson module structure we can endow $\mathcal{O}(1)$ with such a structure and therefore any line bundle. Two Poisson module structures on a given line bundle differ by a choice of a global Poisson vector field, giving that $\text{PicPoiss}(\mathbb{P}^n, \pi)\simeq H^0(\mathfrak{X}_{\text{Poiss}})\times \mathbb{Z}.$

\item If we endow $\mathbb{P}^1\times \mathbb{P}^1$ with a non zero Poisson structure, then the only line bundles that carry a Poisson module structure are those of the form $\mathcal{O}(n, n)$ for $n\in \mathbb{Z}$ and therefore $\text{PicPoiss}(\mathbb{P}^1\times \mathbb{P}^1, \pi)\simeq H^0(\mathfrak{X}_{\text{Poiss}})\times \mathbb{Z}.$ In this case, the forgetful map $\text{PicPoiss}\to \text{Pic}$ is not surjective. \\
\end{enumerate}
\end{example}

\begin{remark} The connection vector field map $\partial_{\text{conn}}:\text{PicPoiss}(X, \pi)\to H^0(X, \mathfrak{X}_{\text{Poiss}}\slash \mathfrak{X}_{\text{Ham}})$ can also be obtained as the morphism induced in cohomology by the following natural map between complexes of sheaves

$$(0\to \mathcal{O}_{X}^{\times}\to T_X \to \Lambda^2 T_X \to \Lambda^3 T_X \to \dots )\to (0\to 0 \to T_X \slash \mathfrak{X}_{\text{Ham}} \to \Lambda^2 T_X\to \Lambda^3 T_X \to \dots).$$

\noindent The kernel of this map of complexes is given by the complex of sheaves $$(0\to \mathcal{O}_{X}^{\times}\to \mathfrak{X}_{\text{Ham}} \to 0 )$$
which is a resolution of the sheaf $\text{Cas}^{\times}_{X}$ of nonvanishing Casimir funcions on $X$. Therefore, the map $\partial_{\text{conn}}$ sits in a long exact sequence in cohomology which is of the form

$$0\to H^1(X, \text{Cas}^{\times}_{X}) \to \text{PicPoiss}(X, \pi)\to H^0(X, \mathfrak{X}_{\text{Poiss}}\slash \mathfrak{X}_{\text{Ham}}) \to H^2(X, \text{Cas}^{\times}_{X}) \to \dots $$

\end{remark}

\noindent We now recall the definition of \textbf{residues} for invertible Poisson modules introduced in the work \cite{PoissonDegeneracy}. These residues are defined to be multiderivations supported on each degeneracy locus induced by the connection vector field of a Poisson line bundle. Given a Poisson line bundle $(L, \nabla)$, its \textbf{k-th Gualtieri-Pym residue} is 
$$\text{Res}_{k}^{GP}(L, \nabla)=\left(Z_{\text{conn}}\wedge \pi^{k} \right)|_{D_{2k}}\in \Gamma(D_{2k}, \Lambda^{2k+1}T_{D_{2k}})$$
where $Z_{\text{conn}}$ is the connection vector field of $(L, \nabla).$ The definition of each residue is independent of the trivialization chosen to provide the connection vector field, since the wedge product $\pi^{k}\wedge X_f$ vanishes in $D_{2k}$ for every hamiltonian vector field $X_f.$  Gualtieri-Pym's residues are actually constructed only using the connection vector field of the line bundle so that the $k-th$ residue defines a map 
$$\Gamma(X, \mathfrak{X}_{\text{Poiss}}\slash \mathfrak{X}_{\text{Ham}})\to \Gamma(D_{2k}, \Lambda^{2k+1}T_{D_{2k}}).$$ 
The residues associated to the canonical Poisson line bundle are called \textbf{modular residues} of $(X, \pi).$ Gualtieri-Pym's residue of a Poisson line bundle vanishes at the points where the connection vector field lies in the hamiltonian distribution and therefore the vanishing of all modular residues implies that the modular vector field is locally hamiltonian.
 
\begin{example}  Let $X=\C^3$ with the holomorphic Poisson structure given by the bivector field $$\pi=c_{12}x_1 x_2 \partial_1 \wedge \partial_2+ c_{13} x_1 x_3 \partial_1 \wedge \partial_3+c_{23} x_2 x_3 \partial_2 \wedge\partial_3,$$ which we have already studied in example \ref{ejemploGP}. For general values of $c_{12}, c_{13}$ and $c_{23}$ the singular locus is the union of the three lines $D_0=\{x_1=x_2=0\}\cup\{x_1=x_3=0\}\cup \{x_2=x_3=0\}=L_{12}\cup L_{13}\cup L_{23}.$ Using the computations from example \ref{ejemploGP} we see that
$$\text{Res}_{0}^{GP}(\omega_{\C^3}, \nabla_{\text{mod}})|{L_{12}}=(c_{13}+c_{23})x_3\partial_3$$
$$\text{Res}_{0}^{GP}(\omega_{\C^3}, \nabla_{\text{mod}})|{L_{13}}=(c_{12}-c_{23})x_2\partial_2$$
$$\text{Res}_{0}^{GP}(\omega_{\C^3}, \nabla_{\text{mod}})|{L_{23}}=-(c_{12}+c_{13})x_1\partial_1.$$
while $\text{Res}_{1}^{GP}(\omega_{\C^3}, \nabla_{\text{mod}})$ vanishes identically.
\end{example}


\section{Ample Poisson modules}
We begin by some remarks involving the sheaves of Casimir functions, hamiltonian and Poisson vector fields on a Poisson analytic space. 

\begin{remark}\label{remark1} There is a natural short exact sequence of sheaves 
$$0\to \mathfrak{X}_{\text{Ham}}\to \mathfrak{X}_{\text{Poiss}}\to \mathfrak{X}_{\text{Poiss}}\slash \mathfrak{X}_{\text{Ham}}\to 0$$
which induces a morphism 
$$\delta: H^0(X, \mathfrak{X}_{\text{Poiss}}\slash \mathfrak{X}_{\text{Ham}})\to H^1(X, \mathfrak{X}_{\text{Ham}}).$$
There is also another short exact sequence of sheaves on any Poisson analytic space encoded by the definition of Casimir function which is given by
$$0\to \text{Cas}_X\to \mathcal{O}_X\to\mathfrak{X}_{\text{Ham}}\to 0$$
this induces a map $\xi:H^1(X, \mathfrak{X}_{\text{Ham}})\to H^2(X, \text{Cas}_X).$ The composition of both morphisms provides us with a map $$\eta=\xi\circ \delta:H^0(X, \mathfrak{X}_{\text{Poiss}}\slash\mathfrak{X}_{\text{Ham}})\to H^2(X, \text{Cas}_X).$$
\end{remark}

\begin{proposition} \label{diagrama}The following diagram is commutative
$$\begin{tikzcd}
\text{PicPoiss}(X, \pi) \arrow{r}{c_1}\arrow{d}{\partial_{\text{conn}}} & H^2(X, \mathbb{Z}) \arrow{d}{i_*} \\
H^0(X, \mathfrak{X}_{\text{Poiss}}\slash \mathfrak{X}_{\text{Ham}}) \arrow{r}{\eta} & H^2(X, \text{Cas}_X) 
\end{tikzcd}$$
\end{proposition}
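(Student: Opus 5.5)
The plan is to derive commutativity from the naturality of connecting homomorphisms, using a morphism of short exact sequences of sheaves. The map $c_1$ is the connecting homomorphism $H^1(X,\mathcal{O}_X^{\times})\to H^2(X,\mathbb{Z})$ of the exponential sequence $0\to\mathbb{Z}\to\mathcal{O}_X\to\mathcal{O}_X^{\times}\to 0$, precomposed with the forgetful map $\text{PicPoiss}(X,\pi)\to \text{Pic}(X)=H^1(X,\mathcal{O}_X^{\times})$. The map $\xi$ is the connecting homomorphism of $0\to \text{Cas}_X\to\mathcal{O}_X\to\mathfrak{X}_{\text{Ham}}\to 0$. So I will:

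\begin{enumerate}
\item Compare these two sequences by a vertical morphism.
\item Identify $\delta\circ\partial_{\text{conn}}$ with the image of the underlying line bundle class in $H^1(X,\mathfrak{X}_{\text{Ham}})$.
\end{enumerate}

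Concretely, the vertical morphism consists of three maps. On $\mathbb{Z}$ it is the inclusion $\mathbb{Z}\hookrightarrow\mathbb{C}\hookrightarrow \text{Cas}_X$ of locally constant functions; this induces $i_*$. On $\mathcal{O}_X$ it is the identity. On $\mathcal{O}_X^{\times}$ it is $g\mapsto X_h$, where $g=\exp(2\pi i h)$ locally. This last map is well defined because $h$ is determined up to an integer constant, whose Hamiltonian vector field vanishes. It equals the Hamiltonian vector field of a local logarithm $\frac{1}{2\pi i}\log g$ of $g$, namely $\frac{1}{2\pi i}\frac{X_g}{g}$. Both squares commute by construction, so naturality of connecting maps gives
\[
\xi\circ\lambda = i_*\circ c_1 \quad\text{on } H^1(X,\mathcal{O}_X^{\times}),
\]
where $\lambda: H^1(X,\mathcal{O}^{\times}_X)\to H^1(X,\mathfrak{X}_{\text{Ham}})$ is induced by the third vertical map.

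It remains to show $\delta\circ\partial_{\text{conn}}=\lambda\circ(\text{forget})$, which I would do in Čech terms. Take a cover $\{U_i\}$ with trivializations $s_i$ of $L$, transition functions $s_i=g_{ij}s_j$, and connection vector fields $Z_i$. The Poisson module rule $\nabla(fs)=-X_f\otimes s+f\nabla s$ gives $Z_i-Z_j=-X_{g_{ij}}/g_{ij}$. The class $\partial_{\text{conn}}(L,\nabla)$ is represented by the local Poisson lifts $Z_i$. Hence $\delta$ of it is the Hamiltonian cocycle $Z_i-Z_j$, or $Z_j-Z_i$ depending on the sign convention for $\delta$. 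Writing $g_{ij}=\exp(2\pi i h_{ij})$, this cocycle is $\pm 2\pi i\, X_{h_{ij}}$. As a sanity check, $\xi$ then lifts it to $\pm 2\pi i\, h_{ij}$, whose Čech coboundary is $\pm 2\pi i(h_{ij}+h_{jk}+h_{ki})$. This is the same integer cocycle that represents $c_1$, up to the constant $\pm 2\pi i$.

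The main obstacle is exactly this normalization, and I expect it to be the only nontrivial point. The constant must be absorbed by the conventions already implicit in the paper, not by redefining $i_*$. I would try first to make the sign and scalar match by two choices. For $\delta$, I take the convention that the coboundary of lifts $(Z_i)$ is $(Z_j-Z_i)$. For the exponential sequence, I take the normalization of $c_1$ in which $\mathbb{Z}\to\mathcal{O}_X$ is $n\mapsto 2\pi i\,n$, composed with $\exp$, so that $\mathbb{Z}$ sits in $\mathcal{O}_X$ as $2\pi i\mathbb{Z}$. Under these choices the third vertical map becomes $g\mapsto -X_g/g$, i.e.\ $g\mapsto -X_{\log g}$, which is precisely what $\nabla$ produces on transition functions. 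The middle vertical map becomes $-\mathrm{id}$. The left vertical map is then $2\pi i\mathbb{Z}\to\text{Cas}_X$, $2\pi i n\mapsto -2\pi i n$, so under the identification of this copy of $\mathbb{Z}$ with $\mathbb{Z}$ it is the plain inclusion up to sign. The remaining sign is then tracked in $\xi$: as the connecting map of $0\to\text{Cas}_X\to\mathcal{O}_X\to\mathfrak{X}_{\text{Ham}}\to 0$, it uses $f\mapsto X_f$, and $X_{-f}=-X_f$. After verifying that these conventions are mutually consistent, the two composites agree on representing cocycles, and the diagram commutes as stated.
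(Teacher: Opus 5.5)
Your argument is correct and is essentially the paper's proof. The paper computes $\eta\circ\partial_{\text{conn}}(L,\nabla)$ directly as the \v{C}ech class of $\log g_{ij}+\log g_{jk}+\log g_{ki}$ and identifies it with $c_1(L)$. In doing so it tacitly uses the normalization you adopt ($\mathbb{Z}$ embedded as $2\pi i\mathbb{Z}$, i.e. $i$ is $n\mapsto 2\pi i n$; with the literal inclusion $n\mapsto n$ the square commutes only up to the factor $2\pi i$). Your morphism of short exact sequences just repackages that cocycle comparison as naturality of connecting maps.

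One correction to your final bookkeeping. With your stated conventions $s_i=g_{ij}s_j$ and $(\delta Z)_{ij}=Z_j-Z_i$, you get $Z_j-Z_i=+X_{g_{ij}}/g_{ij}$, not the negative. So the vertical maps are $g\mapsto X_g/g$, the identity on $\mathcal{O}_X$, and $n\mapsto 2\pi i n$, and no residual sign arises. That matters, because a genuine sign could not be absorbed into $\xi$ by the remark $X_{-f}=-X_f$, as your last paragraph suggests.
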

\begin{proof} We have to verify that $\eta(\partial_{\text{conn}}(L, \nabla))=i_*(c_1(L)).$ Choose a family of open subsets $(U_i)$ with trivializations $s_i$ of $L$ and a Poisson vector field $Z_i$ on each of them such that $\nabla(s_i)=Z_i\otimes s_i.$ Then, the map $\partial_{\text{conn}}(L, \nabla)=(U_i, Z_i)$ seen as a Cech cocycle. We also have that $Z_i - Z_j = \frac{X_{g_{ij}}}{g_{ij}}$ where $(g_{ij})$ is the cocycle representing $L$ in $H^1(X, \mathcal{O}^{\times}_X).$ We then have that $\delta (\partial_{\text{conn}})(L, \nabla)= \frac{X_{g_{ij}}}{g_{ij}}$ and that $$\eta\circ \partial_{\text{conn}}(L, \nabla)=  (\text{log}(g_{ij})+\text{log}(g_{jk})+\text{log}(g_{ki}))_{ijk}$$
seen as a 2-cocycle of Casimir functions associated with the above covering. Since this is exactly the same cocycle that represents $c_1(L)$ we have that the diagram commutes. 
\end{proof}

\begin{corollary} If $(L, \nabla)$ is a Poisson module such that $i_*(c_1(L))\neq 0$ then the connection vector field associated to $(L, \nabla)$ is not locally hamiltonian.
\end{corollary}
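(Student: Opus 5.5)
The corollary follows from Proposition \ref{diagrama} by contraposition. Suppose the connection vector field of $(L,\nabla)$ is locally hamiltonian. By this I mean that the class $\partial_{\text{conn}}(L,\nabla)\in H^0(X,\mathfrak{X}_{\text{Poiss}}/\mathfrak{X}_{\text{Ham}})$ is the zero section. Equivalently, there is an open cover $(U_i)$ with trivializations $s_i$ such that each $Z_i$ is hamiltonian on $U_i$.

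Since $\partial_{\text{conn}}(L,\nabla)=0$ and $\eta=\xi\circ\delta$ is a composition of group homomorphisms, we get $\eta(\partial_{\text{conn}}(L,\nabla))=0$. By the commutativity of the diagram in Proposition \ref{diagrama},
$$i_*(c_1(L))=\eta(\partial_{\text{conn}}(L,\nabla))=0,$$
which contradicts the hypothesis $i_*(c_1(L))\neq 0$.

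The only point to watch is the meaning of ``not locally hamiltonian''. I interpret it as nonvanishing of the global section in $H^0(X,\mathfrak{X}_{\text{Poiss}}/\mathfrak{X}_{\text{Ham}})$. This is independent of the chosen trivializations, because changing $s_i$ changes $Z_i$ by $X_f/f=X_{\log f}$ locally, and this is hamiltonian. Under that interpretation, no further input is needed beyond the additivity of the connecting maps $\delta$ and $\xi$.
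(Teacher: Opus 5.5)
Your proof is correct and is exactly the argument the paper intends: the corollary is stated without a separate proof as an immediate consequence of the commutative square in Proposition \ref{diagrama}, since a vanishing class $\partial_{\text{conn}}(L,\nabla)$ forces $i_*(c_1(L))=\eta(\partial_{\text{conn}}(L,\nabla))=0$. Your reading of ``locally hamiltonian'' as vanishing of the section of $\mathfrak{X}_{\text{Poiss}}/\mathfrak{X}_{\text{Ham}}$ also matches how the paper uses the phrase after Theorem 1.2.
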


\begin{remark} The map $i_*$ above fits in a long exact sequence induced by the exponential short exact sequence 
$$0\to \Z_X \to \text{Cas}_X\to \text{Cas}^{\times}_X\to 0.$$
Therefore, the kernel of $i_*$ consists of the Chern classes of line bundles $L$ that can be represented by a cocycle of Casimir functions. 
\end{remark}

\begin{remark} The connection vector field of a Poisson module together with the map $\eta$ defined on remark \ref{remark1} provides us with a map $\eta\circ \partial_{\text{conn}}: \text{PicPoiss}(X, \pi)\to H^2(X, \text{Cas}_{X})$ which is part of the following diagram 
$$\begin{tikzcd}
H^1 (X, \text{Cas}^{\times}_X)\arrow{d}\arrow{r}& H^1 (X, \text{Cas}^{\times}_X)\arrow{d}{c_1} &  \\
\text{PicPoiss}(X, \pi) \arrow{r}{c_1}\arrow{d}{\partial_{\text{conn}}} & H^2(X, \mathbb{Z}) \arrow{d}{i_*}\\
H^0(\mathfrak{X}_{\text{Poiss}}\slash \text{Ham}) \arrow{r}{\eta}\arrow{d} & H^2(X, \text{Cas}_{X})\arrow{d} &  \\
H^2 (X, \text{Cas}^{\times}_X)\arrow{r}{} & H^2 (X, \text{Cas}^{\times}_X) & \end{tikzcd}$$

\end{remark}

\noindent The following theorem proves that an ample Poisson line bundle has a non zero connection vector field.
\begin{namedtheorem}[Theorem 1.2] Let $X$ be a projective complex analytic space with a Poisson structure $\pi$. If $(L, \nabla)$ is an ample Poisson line bundle on the Poisson analytic space $(X, \pi)$, with $\pi\neq0,$ then the connection vector field of $(L, \nabla)$ defines a non trivial class in $H^0(X, \mathfrak{X}_{\text{Poiss}}\slash \mathfrak{X}_{\text{Ham}})$.
\end{namedtheorem}
\begin{proof}
Since the kernel of $i_*$ is isomorphic to the image of the group $H^1(X, \text{Cas}_{X}^{\times})$ it is enough to prove that ample line bundles cannot be represented by cocycles of Casimir non vanishing functions. We can assume $L$ to be very ample. Suppose that $L$ is given by a cocyle of the form $(g_{ij})_{ij}\in H^1(X, \text{Cas}_{X}^{\times})$ and consider the embedding $j:X\to \mathbb{P}^{m}_{\C}$ provided by the very ample line bundle $L.$ Then, $L\simeq j^*(\mathcal{O}(1))$ and therefore $g_{ij}=(\frac{x_i}{x_j})|_{X}$ for $(x_0:\dots : x_m)$ homogeneous coordinates on $\mathbb{P}^{m}_\C.$ Consider the open affine subset given by $U_j=\{x_j \neq 0\}.$ Since the functions $g_{ij}$ are Casimir functions for all $i$ we have that the affine coordinates on this chart given by $g_{ij}=z_i=\frac{x_i}{x_j}$ are all functions which restricted to $X$ give Casimir functions. We pick a point $p=(p_0, p_1 ,  \dots p_m)\in X\cap U_j$ and the ideal the ideal $I=(z_i-p_i)_{\{i\neq j\}}.$ The ideal $I$ is a Poisson ideal since it is generated by Casimir functions and therefore the point $p\in X$ is a Poisson subvariety of $X.$ Because of this, every hamiltonian vector field must vanish on $p.$ Since $p$ is an arbitrary point, we get that the Poisson bracket vanishes over $X.$
\end{proof}

This means that the connection vector field associated to an ample Poisson line bundle cannot be locally hamiltonian. It is interesting to study further properties of this connection vector field related to its non-vanishing along submanifolds of $X.$ In the following corollary we point out an observation on this direction.

\begin{corollary} Let $X$ be a projective complex analytic space with a Poisson structure $\pi$. If $(L, \nabla)$ is an ample Poisson line bundle on the Poisson analytic space $(X, \pi)$ and $V\subseteq X$ is a strong Poisson subspace such that $D_0(\pi)\subsetneq V,$ then the class in $H^0(X, \mathfrak{X}_{\text{Poiss}}\slash \mathfrak{X}_{\text{Ham}})$ defined by the connection vector field of $(L, \nabla)$ does not vanish along $V.$
\end{corollary}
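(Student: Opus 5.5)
The plan is to reduce the statement to Theorem 1.2 applied to the Poisson analytic space $V$ itself. Since $V$ is a strong Poisson subspace, every local Poisson vector field on $X$ is tangent to $V$. Hence, as remarked in the section on Poisson modules, the restriction $L|_V$ inherits a flat Poisson connection $\nabla|_V$ for the induced Poisson structure $\pi|_V$. If $s_i$ are local trivializations of $L$ with $\nabla(s_i)=Z_i\otimes s_i$, then $s_i|_V$ trivialize $L|_V$, and the associated connection vector fields are $Z_i|_V$, which are Poisson vector fields on $V$. Moreover $L|_V$ is ample, since the restriction of an ample line bundle to a closed subspace of a projective space is ample, and $V$ is projective.

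The key steps are as follows.

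First, interpret ``does not vanish along $V$''. It should mean that the restricted class $(U_i\cap V, Z_i|_V)$ is nonzero in $H^0(V,\mathfrak{X}_{\text{Poiss}}\slash\mathfrak{X}_{\text{Ham}})$ for $(V,\pi|_V)$. Equivalently, there is no covering on which the $Z_i|_V$ are hamiltonian with respect to $\pi|_V$. Note that restriction sends hamiltonian fields $X_f$ to $X_{f|_V}$, so restriction gives a well-defined map between the quotient sheaves.

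Second, check that $\pi|_V\neq 0$. By definition, $D_0(\pi)$ is the zero locus of $\pi$. Since $D_0(\pi)\subsetneq V$, there is a point (or a non-reduced direction) of $V$ outside $D_0$. Near a point $p\in V\setminus D_0$, some hamiltonian vector field $X_f$ of $X$ is nonzero at $p$. Because $V$ is a Poisson subspace, $X_f$ is tangent to $V$ and equals $X_{f|_V}$ for $\pi|_V$. One must then argue that $\pi|_V(p)\neq0$. If $V$ is reduced at such a point this is immediate. Otherwise I would use the description of the symplectic leaves: the leaf through $p$ has positive dimension and lies in $V$ (since $V$ is a Poisson subspace), and it carries the nonzero restricted bracket.

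Third, apply Theorem 1.2 to $(V,\pi|_V)$ with the ample Poisson line bundle $(L|_V,\nabla|_V)$. This gives exactly the nontriviality of the restricted class.

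The main obstacle is the second step, namely making precise that $D_0(\pi)\subsetneq V$ forces $\pi|_V\neq0$. The worry is the non-reduced case, where $V$ strictly contains $D_0$ only scheme-theoretically. In that case I would first try to show $D_0(\pi|_V)=D_0(\pi)\cap V$ scheme-theoretically. The idea is that the zero ideal of a biderivation is generated by the brackets $\{f,g\}$, and brackets on $V$ are restrictions of brackets on $X$. From this, $\pi|_V=0$ would give $V\subseteq D_0(\pi)$, a contradiction.
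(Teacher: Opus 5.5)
Your proposal is correct and matches the paper's proof: restrict $(L,\nabla)$ to the strong Poisson subspace $V$, observe that $L|_V$ is ample and $\pi|_V\neq 0$, and apply Theorem 1.2 to $(V,\pi|_V)$. The paper simply asserts $\pi|_V\neq 0$ from $D_0(\pi)\subsetneq V$; your scheme-theoretic argument supplies the missing justification. It uses that brackets on $V$ are restrictions of brackets on $X$, so $\pi|_V=0$ would give $V\subseteq D_0(\pi)$, and it covers the non-reduced case as well.
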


\begin{proof} Since $V$ is a strong Poisson subspace we have that $(L, \nabla)$ restricts to a Poisson module over $(V, \pi|_V).$ Since $D_0(\pi)\subsetneq V,$ we have that $\pi|_V\neq 0$ and since $L|_V$ is an ample line bundle we can use theorem 1.2 on $V$ and obtain that the connection vector field of $L|_V$ defines a non trivial class on $H^0(V, \mathfrak{X}_{\text{Poiss}}\slash \mathfrak{X}_{\text{Ham}}),$ which is what we wanted to prove. 
\end{proof}

It would be interesting to know if the connection vector field of an ample Poisson line bundle can vanish along $D_0(\pi).$ If this was not possible the modular vector field of a Fano Poisson manifold would always be a non zero vector field tangent to $D_0(\pi)$ proving that $D_0(\pi)$ contains a curve. There are known examples where the degeneracy locus $D_0(\pi)$ has a connected component which is composed of an isolated point (for instance, the Poisson structures on $\mathbb{P}^3$ associated with logarithmic foliations of type L(1, 1, 2)), since these isolated points are strong Poisson submanifolds it becomes aparent that the connection vector field of an ample Poisson line bundle can indeed vanish along certain strong Poisson subspaces contained in $D_0(\pi)$. 

\begin{corollary} Let $X$ be a projective complex analytic space with a Poisson structure $\pi$ of constant rank $k\neq 0$. If $(L, \nabla)$ is an ample Poisson line bundle on the Poisson analytic space $(X, \pi)$, then $\text{dim}(X)\geq 2k+1$.  
\end{corollary}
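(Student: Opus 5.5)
The plan is to argue by contradiction: assume $\dim X\le 2k$, show that $X$ is then a holomorphic symplectic manifold, and observe that on such a manifold the quotient $\mathfrak{X}_{\text{Poiss}}\slash \mathfrak{X}_{\text{Ham}}$ vanishes. Since $k\neq 0$ we have $\pi\neq 0$, so Theorem 1.2 applies and says that the connection vector field of $(L,\nabla)$ gives a \emph{non trivial} class in $H^0(X, \mathfrak{X}_{\text{Poiss}}\slash \mathfrak{X}_{\text{Ham}})$. The two facts are incompatible, which proves the corollary.

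\emph{Step 1: $X$ is smooth and symplectic.} Fix $p\in X$. Constant rank $k$ means $\pi^k(p)\neq 0$, so I will choose local functions $f_1,\dots,f_{2k}$ near $p$ with $\det(\{f_a,f_b\}(p))\neq 0$. The hamiltonian vector fields $X_{f_1},\dots,X_{f_{2k}}$ are then linearly independent at $p$ and near $p$, since the matrix stays invertible. Their flows commute up to hamiltonian corrections and preserve $X$, because every hamiltonian vector field is tangent to $X$. I would then use them as in Weinstein's splitting theorem: the map $(t,y)\mapsto \phi^{t_1}_{X_{f_1}}\circ\cdots\circ\phi^{t_{2k}}_{X_{f_{2k}}}(y)$, with $y$ ranging over the transversal slice $S=\{f_1=f_1(p),\dots,f_{2k}=f_{2k}(p)\}\cap X$, gives a local isomorphism of analytic germs $(X,p)\simeq (\C^{2k},0)\times (S,p)$. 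It follows that $\dim_p X = 2k+\dim_p S$. If $\dim X\le 2k$, then $S$ is zero-dimensional; being a germ through $p$, it is the reduced point. Hence $X$ is smooth of dimension $2k$ near $p$, and $\pi$ is nondegenerate there. As $p$ was arbitrary, $X$ is a compact holomorphic symplectic manifold.

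\emph{Step 2: Poisson vector fields are locally hamiltonian.} Let $\omega$ be the symplectic form inverse to $\pi$. A vector field $Z$ satisfies $L_Z\pi=0$ exactly when $L_Z\omega=0$, and by Cartan's formula this is equivalent to $d(i_Z\omega)=0$. By the holomorphic Poincar\'e lemma, on a small ball we can write $i_Z\omega=dh$, so $Z$ equals $\pm X_h$. Therefore $\mathfrak{X}_{\text{Poiss}}=\mathfrak{X}_{\text{Ham}}$ as sheaves, the quotient sheaf is zero, and so $H^0(X, \mathfrak{X}_{\text{Poiss}}\slash \mathfrak{X}_{\text{Ham}})=0$. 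This contradicts Theorem 1.2, which concludes the proof.

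The main obstacle is Step 1 in the possibly singular setting: the splitting theorem must hold for Poisson analytic spaces, not just manifolds. I would handle this by embedding the germ $(X,p)$ in a smooth ambient germ $\C^N$ and lifting $f_1,\dots,f_{2k}$ and their hamiltonian fields to $\C^N$. Since the lifted fields are tangent to $X$, their flows preserve $X$. The product decomposition then follows from the flow-box argument together with the nondegeneracy of $\{f_a,f_b\}$ at $p$, exactly as in the smooth case.
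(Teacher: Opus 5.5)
Your argument is correct once $X$ is reduced, and it takes a genuinely different route from the paper. One sentence needs repair if $X$ is allowed to have nilpotents.

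\textbf{Comparison with the paper.} The paper argues directly. Theorem~1.2 makes $Z$ not locally hamiltonian, so the $(2k+1)$-multiderivation $R=Z\wedge\pi^k$ (the $k$-th Gualtieri--Pym residue, defined on all of $X=D_{2k}$) is nonzero somewhere, and this is taken to force $\dim X\geq 2k+1$. That route makes $2k+1$ appear as the degree of the residue, in line with the paper's theme. It tacitly uses two facts, however:
\begin{enumerate}
\item at constant rank, a Poisson vector field with $Z\wedge\pi^k\equiv 0$ is locally hamiltonian (a leafwise Poincar\'e lemma in a local normal form, which on a singular space needs essentially your splitting);
\item no nonzero multiderivation has degree exceeding $\dim X$.
\end{enumerate}
You instead argue by contradiction and put all the local analysis into one claim: $\dim X\le 2k$ with $\pi^k$ nowhere zero forces $X$ to be a compact holomorphic symplectic manifold. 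Your Step~1 is the classical splitting of analytic germs along vector fields independent at $p$ (going back to Rossi). Commutation of the flows plays no role there; independence and transversality to the slice are all that matter. After that you need only the ordinary Poincar\'e lemma, and no multiderivations at all. Your reduction even makes Theorem~1.2 unnecessary: on a symplectic manifold a Poisson module structure on $L$ is the same as a flat connection, as the paper notes among its examples, so $c_1(L)=0$ and $L$ cannot be ample.

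\textbf{The step to fix.} The sentence ``being a germ through $p$, it is the reduced point'' is false in general, because a zero-dimensional germ may be a fat point. Step~2 then fails. For example, take $\C^{2k}\times\mathrm{Spec}\,\C[\epsilon]/(\epsilon^2)$ with the standard bracket and $\epsilon$ Casimir. There $\epsilon\partial_\epsilon$ is a Poisson derivation but not hamiltonian, since every $X_h$ kills $\epsilon$.

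The fix is to replace $X$ by $X_{\mathrm{red}}$ at the start. Derivations preserve the nilradical, so $X_{\mathrm{red}}$ is a strong Poisson subspace. Then $L|_{X_{\mathrm{red}}}$ is still an ample Poisson line bundle, and neither the dimension nor the values $\{f_a,f_b\}(p)$ change.

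The paper's final step has the same blind spot. In the same example, $\epsilon\partial_\epsilon\wedge\pi^k$ is a nonzero $(2k+1)$-multiderivation on a $2k$-dimensional space.
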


\begin{proof} Since $\pi $ has rank $k$ in $X$ we have that the Poisson structure on $X$ does not vanish and then we can apply theorem 1.2 to $X.$ This shows that $i_*(c_1(L))\neq 0$ since $L$ is an ample line bundle and therefore its connection vector field is a non trivial class in $[Z]\in H^0(X, \mathfrak{X}_{\text{Poiss}}\slash \mathfrak{X}_{\text{Ham}}).$ We finally consider Gualtieri-Pym's modular residue $R=Z\wedge \pi^k$ which is a multiderivation in $X$ of degree $2k+1.$ Since $Z$ is not locally hamiltonian there is at least one point where $R$ is non zero and therefore $\text{dim}(X)\geq 2k+1.$
\end{proof}

\noindent Since the argument in theorem 1.2 is local, a similar result is valid even when considering a quasi-projective non complete variety. 

\begin{corollary} \label{coroco}Let $U$ be a quasi projective complex analytic space with a Poisson structure $\pi$ of constant rank $k\neq 0$. If $(L, \nabla)$ is a Poisson line bundle on $U$ such that $L$ is the pullback of an ample line bundle on a projective space and $c_1(L)\neq 0,$ then $\text{dim}(U) \geq 2k+1$.
\end{corollary}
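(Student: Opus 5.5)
The plan is to imitate the proof of Theorem 1.2 and then finish with the residue argument from the preceding corollary.

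\textbf{Step 1: reduce to Casimir cocycles.} Suppose, for contradiction, that $\dim(U)\leq 2k$. The Gualtieri--Pym residue $Z\wedge\pi^k$ is a $(2k+1)$-multiderivation. I would show that it then vanishes, which forces the class $[Z]=\partial_{\text{conn}}(L,\nabla)$ to be locally hamiltonian. In the smooth (or generically smooth) constant-rank case this is clear. Since $\pi$ has constant rank $k$, the hamiltonian distribution is regular of rank $2k$. If $\dim U=2k$ the structure is symplectic, so every Poisson vector field is locally hamiltonian. On the symplectic leaves a Poisson vector field is locally hamiltonian by the Poincar\'e lemma, which gives $[Z]=0$ locally. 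Proposition \ref{diagrama} then yields $i_*(c_1(L))=0$. By the exponential sequence $0\to\Z_U\to\text{Cas}_U\to\text{Cas}_U^\times\to 0$, the class $c_1(L)$ lies in the image of $H^1(U,\text{Cas}^\times_U)$. Hence $L$ is represented by a cocycle $(g_{ij})$ of nonvanishing Casimir functions.

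\textbf{Step 2: transport the local argument of Theorem 1.2.} Write $L\simeq j^*\mathcal{O}(1)$ for a map $j:U\to\P^m$. Up to a coboundary, the transition functions are $g_{ij}=(x_i/x_j)|_U$. The key point is to arrange that the affine coordinates $z_i=x_i/x_j$ restricted to $U\cap\{x_j\neq0\}$ are themselves Casimir, not merely cohomologous to Casimir functions. This is where the argument of Theorem 1.2 is local, which is the remark preceding the corollary.

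Granting this, pick any point $p\in U\cap\{x_j\neq 0\}$. The ideal $(z_i-p_i)_{i\neq j}$ is generated by Casimirs, so it is a Poisson ideal. If $j$ is an immersion, or at least separates points locally, this ideal cuts out $p$ set-theoretically, so all hamiltonian vector fields vanish at $p$. Doing this at every point gives $\pi=0$, contradicting $k\neq 0$. If $j$ is not an embedding, I would replace $L$ by a power, or use that $c_1(L)\neq 0$ and quasi-projectivity to choose $j$ as a locally closed embedding.

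\textbf{Main obstacle.} The delicate step is the comparison of cocycles in Step 2. One has $(g_{ij})$ Casimir and $(x_i/x_j)$ defining the same line bundle, so they differ by a coboundary $h_i/h_j$ with $h_i\in\mathcal{O}^\times$. On a non-complete $U$ we cannot use global sections to rigidify $h_i$. I would try the following: the isomorphism $L\simeq j^*\mathcal{O}(1)$ is compatible with the Poisson modules up to a global Poisson vector field, so the $h_i$ can be absorbed into the trivializations $s_i$. The sections $x_i$ then become flat in the $\text{Cas}$ sense and their ratios are Casimir. The hypothesis $c_1(L)\neq 0$ guarantees we are not in the degenerate situation where $L$ is trivial and carries no information.
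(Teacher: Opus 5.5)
\textbf{The gap is in Step 2, and your proposed repair does not close it.} Knowing that $L$ admits \emph{some} Casimir cocycle, coming from local flat trivializations $s_\alpha$, tells you nothing about the particular trivializations $x_j|_U$. Write $x_i=a_{i\alpha}s_\alpha$. Then $x_i/x_j=a_{i\alpha}/a_{j\alpha}$ is Casimir iff $a_{j\alpha}X_{a_{i\alpha}}=a_{i\alpha}X_{a_{j\alpha}}$. Equivalently, there must be one global Poisson vector field $W$ with $\nabla x_j=W\otimes x_j$ for all $j$.

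It is true that two Poisson connections on $L$ differ by a global Poisson vector field. But that only helps if $L$ already carries a Poisson connection making every $x_j$ flat, which is exactly what is to be proved. Nothing on $\P^m$ supplies it, since $j$ is not a Poisson map.

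In fact the claim is false. Take:
\begin{itemize}
\item $U=\C^2\times C$ with $\pi=\partial_{z_1}\wedge\partial_{z_2}$, so $k=1$ and the rank is constant;
\item $C$ a projective curve with a very ample bundle $A$;
\item $L=p_C^*A$, with the connection for which pulled-back trivializations are flat (their transition functions depend only on the $C$ factor, hence are Casimir);
\item $j$ the locally closed embedding $\C^2\times C\subset\P^2\times\P^N\to\P^{3N+2}$ given by the Segre map.
\end{itemize}
Then $L\simeq j^*\mathcal{O}(1)$, $c_1(L)\neq0$ in $H^2(U,\C)$, and $[Z]=0$. Yet the affine coordinates $z_1,z_2$ are not Casimir and $\pi\neq0$. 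Step 2 never uses $\dim U\le 2k$, so it would ``prove'' $\pi=0$ in this example.

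The paper's one-line justification (the argument of Theorem 1.2 ``is local'') rests on the same identification $g_{ij}=(x_i/x_j)|_X$. This example shows that its intermediate claim $[Z]\neq0$ fails for quasi-projective $U$. So mirroring the paper, including its residue step $Z\wedge\pi^k$, cannot close the gap.

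\textbf{The fix is to stop after Step 1, once it is stated correctly.} If $\dim U\le 2k$ then $\dim U=2k$. In the smooth setting you already assume, $\pi$ is then symplectic. Consequently:
\begin{itemize}
\item Casimir functions on connected open sets are constant, so $\text{Cas}_U=\C_U$.
\item $i_*$ is just the coefficient map $H^2(U,\Z)\to H^2(U,\C)$.
\item Every Poisson vector field is locally hamiltonian, so $[Z]=0$.
\item Proposition \ref{diagrama} then gives $c_1(L)=0$ in $H^2(U,\C)$. Equivalently, local flat trivializations have locally constant transition functions, so $L$ carries a flat holomorphic connection.
\end{itemize}
This contradicts $c_1(L)\neq0$. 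That hypothesis must be read with complex coefficients, as in the proof of Theorem 1.3, because a nonzero torsion integral class is compatible with flatness. The argument never uses that $L$ is pulled back from an ample bundle.

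Separately, one inference in Step 1 is a non sequitur: ``$c_1(L)$ lies in the image of $H^1(U,\text{Cas}_U^\times)$, hence $L$ is given by a Casimir cocycle''. This only produces some $M$ with $c_1(M)=c_1(L)$. The correct reason is the exact sequence in the Remark: $[Z]=0$ yields local flat trivializations, whose transition functions are Casimir. With the fix above you do not need this step anyway.
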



\noindent We are now in position to prove our main theorem.

\begin{namedtheorem}[Theorem 1.3] Given $(X, \pi)$ a complex Fano Poisson manifold such that $D_{2k-2}(\pi)\subsetneq D_{2k}(\pi),$ then either $D_{2k-2}(\pi)$ contains an irreducible component of dimension at least ${2k-1}$ or $D_{2k}(\pi)$ contains an irreducible component of dimension at least $2k+1$.
\end{namedtheorem}
\begin{proof}
    Since the inclusion is strict, there exists a point $p\in D_{2k}(\pi)-D_{2k-2}(\pi).$ We have then that $\pi^k(p)\neq 0$ and therefore $D_{2k}(\pi)$ has an irreducible component passing through $p$ of dimension at least $2k.$ We call that component $V.$ Since $V$ is a strong Poisson subspace of $X$ the anticanonical bundle $(\omega_X)^{-1}$ of $X$ restricts to $V$ producing an ample Poisson line bundle over $V$ which we call $L.$ Let $F=D_{2k-2}(\pi)\cap V$ and $U=V-F.$ We know that $c_1(L)\in H^2(V, \C)$ is not zero and we have the following exact sequence of relative cohomology 
    $$\dots\to H^2(V,U, \C)\to H^2(V, \C)\to H^2(U, \C)\to \dots $$
    but then either the class of $c_1(L|_U)\neq 0$ or there exists a nontrivial class on $H^2(V, U, \C)$. There is a map  $H^2(V, U, \C)\to H_{2n-2}(F, \C),$ where $n$ is the complex dimension of $V$ which is an isomorphism if $V$ is smooth, given by the cap product $-\cap [V].$ Suppose that $\xi\in H^2(V, U, \C)$ is such that $i_*(\xi)=c_1(L)$ and consider the class $\xi\cap [V]\in H_{2n-2}(F, \C).$ We have that $i_*(\xi\cap [V])=c_1(L)\cap [V]\neq 0$ and therefore $H_{2n-2}(F, \C)\neq 0,$ which implies that $\text{dim}(F)\geq n-1\geq 2k-1.$\\

Now suppose that $c_1(L|_U)\neq 0.$ Since $\pi $ has rank $k$ in $U$ we have that the Poisson structure on $U$ does not vanish and then we can apply corollary \ref{coroco} to $U$ and therefore $\text{dim}(U)\geq 2k+1,$ showing that $\text{dim}(V)\geq 2k+1.$
\end{proof}

\noindent The above theorem shows that at least half of the subspaces defining degeneracy locus of a Fano Poisson variety satisfy the dimension bounds on Bondal's conjecture.



{\small

\begin{thebibliography}{xxx}


\bibitem[Bon93]{bondal1993non}
Bondal, Alexey;
Non-commutative Deformations and Poisson Brackets on Projective Spaces;
Max-Planck-Institut f{\"u}r Mathematik, (1993).
%

\bibitem[BZ99]{outerderivation}
Brylinski, Jean-Luc; Zuckerman, Gregg; 
The outer derivation of a complex Poisson manifold;
J. Reine Angew. Math. 506 (1999), 181--189.

\bibitem[CLN96]{CerveauLinsNeto}
Cerveau, Dominique and Lins Neto, Alcides;
Irreducible Components of the Space of Holomorphic Foliations of Degree Two in CP(n), n $\geq$ 3;
Annals of Mathematics, Second Series, Vol. 143, No. 3 (May, 1996), pp. 577-612.

\bibitem[GP12]{PoissonDegeneracy}
Gualtieri, Marco and Pym, Brent;
Poisson modules and degeneracy loci;
Proc. Lond. Math. Soc. (3) 107 (2013), no. 3, 627--654.

\bibitem[Pol97]{Polischuk}
Polischuk, Alexander;
Algebraic geometry of Poisson brackets;
Journal of Mathematical Sciences (New York), Vol. 84, No. 5 (1997), pp. 1413--1444.

\bibitem[Pym18]{Pym}
Pym, Brent;
Constructions and classifications of projective Poisson varieties;
Letters in Mathematical Physics , Vol. 108, No. 3 (2018), pp. 573--632.

\bibitem[Wei97]{weinstein}
Weinstein, Alan;
The modular automorphism group of a Poisson manifold;
Journal of Geometry and Physics, Vol 23, No. 3-4 (1997), pp. 379--394.

\end{thebibliography}
}
\end{document}